\documentclass[reqno]{amsart}
\usepackage[colorlinks=true]{hyperref} 
\usepackage{mathrsfs}
\usepackage[numbers,compress]{natbib}
\usepackage{amssymb,times}
 \usepackage[text={130mm,200mm},centering]{geometry} 

\hypersetup{urlcolor=blue, citecolor=red}

\newcommand{\be}{\begin{equation}\begin{aligned}}
\newcommand{\ee}{\end{aligned}\end{equation}}
\newcommand{\ben}{\begin{equation}\nonumber\begin{aligned}}

 \newcommand{\A}{\mathscr{A}}

 \newcommand{\R}{\mathbb{R}}

\newcommand{\N}{\mathbb{N}}

\renewcommand{\d}{{\rm d}}

\newcommand{\dist}{{\rm dist}}

\renewcommand{\leq}{\leqslant}
\renewcommand{\geq}{\geqslant}

\newtheorem{assumption}{Assumption}

\begin{document}

\title[Asymptotically autonomous pullback attractors]{Convergences of asymptotically autonomous pullback attractors towards  semigroup attractors} 

\author[H. Cui ]{} 


\subjclass[2000]{35B40, 35B41, 37L30}
\keywords{Asymptotically autonomous dynamical system; pullback attractor;
      \hfill\break\indent  limiting equation; upper semi-continuity}
   \thanks{{\emph{Email address:} {\color{blue}h.cui@outlook.com} (H. Cui)} \qquad  \qquad  \qquad  \qquad   \qquad  \qquad  \qquad  \qquad  \qquad {\today}}

\maketitle

\centerline{\scshape Hongyong Cui }
\medskip
{\footnotesize
 \centerline{School of Mathematics  and Statistics,}
   \centerline{ Huazhong University of Science \& Technology, }
   \centerline{ Wuhan 430074,  P.R. China}
}
 
\begin{abstract}
For pullback attractors of asymptotically autonomous dynamical systems we   study   the convergences of their   components     towards the global attractors of the limiting semigroups. We use some conditions of uniform boundedness of pullback attractors,  instead of   uniform compactness conditions used in the literature. Both forward convergence and backward convergence are studied.
\end{abstract}

\numberwithin{equation}{section}
\newtheorem{theorem}{Theorem}[section]
\newtheorem{lemma}[theorem]{Lemma}
\newtheorem{proposition}[theorem]{Proposition}
\newtheorem{remark}[theorem]{Remark}
\newtheorem{definition}[theorem]{Definition}
\allowdisplaybreaks

\section{Introduction}

The theory of pullback attractors is a  useful tool  to  study  the long time behavior of evolution systems with non-autonomous forcings. Unlike autonomous systems, long time behavior in non-autonomous systems have interpretations pullback and forward.

 In  dynamical
system theory, such a non-autonomous evolution system is often   formulated as  a \emph{process}, i.e., a mapping $U: \R^2_\geq \times X\to X$ satisfying $U(\tau,\tau,x)=x$    and  $ U(t,s,U(s,\tau,x))=U(t,\tau,x)$   for all $t\geq s\geq \tau$ and $x\in X$, where   $\R^2_{\geq }:=\{(t,\tau)\in \R^2: t\geq \tau\}$ and $ X  $  a complete metric space, while the    pullback attractor $\mathfrak{A}$ of a process $U$  is defined as   a    compact non-autonomous   set in the form $\mathfrak{A}=\{A(t)\}_{t\in\R}$ which is the minimal among those that are invariant and  pullback attract non-empty bounded sets in $X$.   The pullback attractor gives rich information of the asymptotic dynamics of the system from the past, and has  close relationship  to other kinds of attractors,  such as  uniform attractors, cocycle attractors, etc., see \cite{carvalho15dcds,cui17jde,cui17dcds}.  Its time-dependence   is directly related to the non-autonomous  characteristic of the system.

Intuitively, when 
 the non-autonomous forcing of a dynamical system becomes more and more autonomous, i.e., mathematically,  converges in time  to a time-independent forcing in some sense, the  non-autonomous nature of the pullback attractor should  correspondingly become  weaker and weaker. Hence, if the limiting  system is an autonomous semigroup with a global attractor, the pullback attractor should, in some sense, converge to that  global attractor in  time.  This motivates to the  asymptotically autonomous  study of pullback attractors, see, e.g. 
\cite{carvalho13,kloeden15jmaa,kloeden17jmaa,li18jmaa}. 

Most recently, Li et al$.$  \cite{li18jmaa} showed that
\begin{theorem}[\cite{li18jmaa}]\label{li}
Let $\mathfrak{A}$ be  the pullback attractor of a process $U$ and   $\A$  the global attractor of a semigroup $S$. Suppose  that
\begin{itemize}
\item[(i)] 
 for any $\{x_\tau\}$ with $\lim_{\tau\to \infty} d_X(x_\tau,x_0)=0$,
\be \label{li01}
\lim_{\tau\to\infty} \dist\big(U(\tau+t,\tau, x_\tau), S(t,x_0)\big)=0,\quad \forall t\geq 0 ;
\ee 
\item[(ii)]  the pullback attractor $\mathfrak{A}$ is forward compact, i.e.,  the union $\cup_{s\geq 0} A(s)$ is precompact. 
\end{itemize}
Then  $\lim_{t\to\infty} \dist \big(A(t),\A \big)=0$.
\end{theorem}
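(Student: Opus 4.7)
My strategy is to argue by contradiction: suppose the conclusion fails, so that there exist $\epsilon_0>0$, a sequence $t_n\to\infty$, and points $a_n\in A(t_n)$ with $\dist(a_n,\A)\geq \epsilon_0$. By the forward compactness hypothesis (ii), the set $K:=\overline{\bigcup_{s\geq 0}A(s)}$ is compact, so along a subsequence $a_n\to a_*$ for some $a_*\in K$ with $\dist(a_*,\A)\geq \epsilon_0$. Everything reduces to proving $a_*\in \A$, which will give the contradiction.

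To place $a_*$ in $\A$, I will combine the invariance of the pullback attractor with the limiting hypothesis (i). Fix $T>0$. Invariance gives $A(t_n)=U(t_n,t_n-T,A(t_n-T))$, so I can write $a_n=U(t_n,t_n-T,b_n)$ with $b_n\in A(t_n-T)\subset K$ for $n$ large. Using compactness of $K$ once more, I extract a further ($T$-dependent) subsequence along which $b_n\to b_*^T\in K$. Then hypothesis (i), applied with $\tau=t_n-T\to\infty$ and $x_0=b_*^T$, yields
\[
a_n=U\bigl((t_n-T)+T,\,t_n-T,\,b_n\bigr)\;\longrightarrow\; S(T,b_*^T).
\]
Since also $a_n\to a_*$ along this subsequence, I obtain the representation $a_*=S(T,b_*^T)$ with $b_*^T\in K$, valid \emph{for every} $T>0$.

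The conclusion now follows from the global attractor property. Because $K$ is bounded and $\A$ attracts bounded sets under $S$, one has $\dist(S(T,K),\A)\to 0$ as $T\to\infty$. Hence
\[
\dist(a_*,\A)=\dist\bigl(S(T,b_*^T),\A\bigr)\;\leq\;\dist\bigl(S(T,K),\A\bigr),
\]
and letting $T\to\infty$ gives $\dist(a_*,\A)=0$, so $a_*\in\A$ (as $\A$ is closed). This contradicts $\dist(a_*,\A)\geq\epsilon_0$. The most delicate point I anticipate is careful bookkeeping of subsequences: forward compactness must be invoked twice (once for $\{a_n\}$ and once for $\{b_n\}$ at each fixed $T$), but no diagonal argument over $T$ is needed, since the identity $a_*=S(T,b_*^T)$ only has to hold for each $T$ separately.
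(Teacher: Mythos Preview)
Your proof is correct. The paper itself does not prove Theorem~\ref{li} (it is quoted from \cite{li18jmaa}), but it does prove the backward analogue, Theorem~\ref{16.3}, and your argument mirrors that proof: contradiction, compactness of $K=\overline{\cup_{s\geq 0}A(s)}$ to extract convergent subsequences, invariance of $\mathfrak{A}$ to produce preimages $b_n\in K$, hypothesis~(i) to pass to the semigroup orbit, and attraction of $\A$ to close. The only organizational difference is that the paper first fixes a single $T_0$ with $\dist(S(T_0,K),\A)<\delta/2$ and derives the contradiction directly at that $T_0$ (so it never needs to extract a limit $a_*$ of the $a_n$), whereas you show $a_*=S(T,b_*^T)$ for every $T>0$ and send $T\to\infty$ at the end; both routes are equally valid.
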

 The above theorem     improves the corresponding results of Kloeden and Simsen \cite{kloeden15jmaa} who used similar compactness conditions but with more  uniformity.  However,   in   view of  applications,    proving the compactness of the attractor itself is often where the real difficulty lies, especially in cases  in which Sobolev compact embeddings cannot help. 

In this paper we   give an alternative theorem, making use of a \emph{forward  boundedness} condition  instead of the   \emph{forward  compactness} condition and with condition \eqref{li01} slightly modified.  By an   example on an unbounded domain presented in the last section, we show that the modified version of condition   \eqref{li01} can be verified quite easily and  that the forward boundedness condition can be obtained directly by estmiates of solutions, without any particular techniques needed  as previously to show   forward  compactness.

 We also study the backwards  convergence problem, i.e., of   $ A(t) $ converging to $\A$ as $t\to -\infty$.  We first give a theorem using compactness conditions  analogously to Theorem \ref{li}, and then 
 improve it with boundedness conditions.   Remarkably, the use of boundedness conditions  also enables us   to show that  the    backwards convergence can be in the full Hausdorff metric sense, not only in the semi-metric sense. In other words, the global attractor can be the $\alpha$-limit set of the pullback attractor.

 Note that similar topics to asymptotic equivalence have also been studied for skew-product flows and ordinary differential equations, see, e.g., \cite{sell71,kloeden11,wang04na,cui17dcds}. Our results  do not require a compact phase space, and we  obtain  convergences in the full Hausdorff metric sense rather than only in the semi-metric sense. In addition,  no continuity conditions  of the dynamical systems are assumed throughout this paper.

 \section{Sufficient conditions} 
 
 In this section we formulate our theoretical results on sufficient condtions ensuring the   convergences of pullback attractors  towards global attractors. 
 For  a complete metric space $(X, d_X)$ we  denote by $\dist$ the Hausdorff semi-metric between nonempty sets, i.e.,
 \[
  \dist(A,B) :=\sup_{a\in A}\inf_{b\in B}d_X(a,b),
 \]
 and denote  the Hausdorff metric by $\dist_H(A,B):=\max\{\dist(A,B),\dist(B,A)\}$.

   \begin{definition}
   A family $\{E(t)\}_{t\in \R}$ of nonempty sets is said to be 
   \begin{itemize}
    \item[(i)]  \emph{forward bounded/compact}, if there exists a bounded/compact  set $B$ such that $$\bigcup_{t\geq 0} E(t)\subset B ; $$
        \item[(ii)]  \emph{backwards bounded/compact}, if there exists a bounded/compact  set $K$ such that 
        $$\bigcup_{t\leq 0} E(t)\subset K.$$
   \end{itemize}
   \end{definition}

    \subsection{Forward convergence in distant future}
       
       Now we establish an alternative theorem for Theorem \ref{li}, using forward boundedness condition instead of the forward compactness condition.

     \begin{theorem} \label{26.0}
     Suppose that $U$ is a process with  pullback attractor $\mathfrak{A} =\{A(t)\}_{t\in\R}$   and $S$  is a semigroup with  global attractor $\A$.  
    If
    \begin{itemize}
    \item[(i)]  $\mathfrak{A}$ is forward bounded, i.e.,
     there is a bounded set $B $ such that $\cup_{t\geq 0}A(t)\subset B $;
     \item[(ii)]
     the following asymptotically autonomous condition holds
     \be \label{7.1}
    \lim_{t\to\infty} \sup_{x\in 
    B } d_X \big( U(t+T,t,x), S(T ,x)\big) = 0 ,\quad \forall T>0,
     \ee 
     \end{itemize}
     then 
    \be \label{26.5}
     \lim_{t\to \infty}  \dist\big(A(t),\A\big) =0 .
    \ee
     \end{theorem}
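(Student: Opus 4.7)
The plan is to argue by contradiction. Suppose \eqref{26.5} fails; then there exist $\delta>0$, a sequence $t_n\to\infty$, and points $a_n\in A(t_n)$ with $\dist(a_n,\A)\geq \delta$ for all $n$. The strategy is to write each $a_n$, via the invariance of the pullback attractor, as the image of a point already sitting in the forward absorbing set $B$, push it forward by the semigroup $S$ using hypothesis (ii), and then use the attraction property of $\A$ to land in a small neighborhood of $\A$ — contradicting the choice of $\delta$.

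First I would fix $\varepsilon\in(0,\delta)$. Since $B$ is bounded and $\A$ is the global attractor of the semigroup $S$, there exists $T=T(\varepsilon)>0$ so large that
\be \label{plan1}
\dist\bigl(S(T,B),\A\bigr)<\varepsilon/2.
\ee
Keeping this $T$ fixed, apply hypothesis (ii): there is $N=N(T,\varepsilon)$ such that for every $n$ with $t_n-T\geq N$,
\be \label{plan2}
\sup_{x\in B} d_X\bigl(U(t_n,t_n-T,x),S(T,x)\bigr)<\varepsilon/2.
\ee

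Next I would exploit the invariance $A(t_n)=U(t_n,t_n-T,A(t_n-T))$ of the pullback attractor. Taking $n$ large enough that $t_n-T\geq 0$, the forward boundedness hypothesis (i) gives $A(t_n-T)\subset B$, so there is $b_n\in A(t_n-T)\subset B$ with $a_n=U(t_n,t_n-T,b_n)$. Combining \eqref{plan2} with $x=b_n$ and \eqref{plan1} applied to $S(T,b_n)\in S(T,B)$ yields
\[
\dist(a_n,\A)\leq d_X\bigl(a_n,S(T,b_n)\bigr)+\dist\bigl(S(T,b_n),\A\bigr)<\varepsilon/2+\varepsilon/2=\varepsilon<\delta,
\]
contradicting $\dist(a_n,\A)\geq \delta$ and completing the argument.

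The only delicate point — and what makes the theorem genuinely stronger than a naive adaptation of Theorem \ref{li} — is the interplay between the uniform-in-$x\in B$ version of the asymptotically autonomous condition \eqref{7.1} and the forward boundedness of $\mathfrak{A}$: because the fibers $A(t_n-T)$ may fail to converge to any fixed compact set (that is exactly what we have given up by dropping forward compactness), the points $b_n$ need not have a convergent subsequence, so pointwise convergence of $U(t+T,t,\cdot)$ to $S(T,\cdot)$ as in \eqref{li01} is not sufficient; the supremum over $B$ in \eqref{7.1} is precisely what lets us evaluate the estimate at the moving points $b_n$. No continuity of $S$ or $U$ in the spatial variable is invoked, consistent with the remark made in the introduction.
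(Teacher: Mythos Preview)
Your proof is correct and follows essentially the same contradiction argument as the paper: use attraction of $\A$ to choose $T$ with $\dist(S(T,B),\A)<\delta/2$, use invariance of $\mathfrak{A}$ and forward boundedness to write $a_n=U(t_n,t_n-T,b_n)$ with $b_n\in B$, and then use the uniform convergence \eqref{7.1} plus the triangle inequality to force $\dist(a_n,\A)<\delta$. Your version is in fact slightly more careful than the paper's in explicitly noting that one needs $t_n-T\geq 0$ to guarantee $A(t_n-T)\subset B$, and your closing remark correctly identifies why the supremum over $B$ in \eqref{7.1} is essential once forward compactness is dropped.
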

     \begin{proof}

     If it is not the case, then there exist $ \delta>0$ and $t_n\to \infty$ such that 
      \ben
   \dist\big(A (  {t_n}), \A \big) > \delta,\quad \forall n\in \N  .
    \ee 
   By the compactness  of the attractor $\mathfrak{A}$, for any $n\in \N$  there exists an  $x_{n }\in A ( {t_n} ) $ such that  
    \be \label{26.8}
    \dist(x_{n },\A)= \dist\big(A ({t_n} ), \A\big)>\delta.
    \ee
      Since $B $ is attracted by $\A$ under $S$,   there exists a $T>0$ such that 
    \be \label{26.7}
   \dist\big(S(T, B ), \A\big) < \delta/2. 
    \ee 
    In addition, by the invariance  of $\mathfrak{A}$, for each $n\in \N$    there exists  $y_{n}\in A({t_n-T} )\subset B  $ such that  $
      x_{n }=U(t_n, t_n-T,  y_{n})$,
 which along with \eqref{26.8} implies 
    \be \label{26.1}
     \dist\big(U(t_n, t_n-T,  y_{n }), \A \big) = \dist(x_{n },\A)>\delta ,\quad \forall n\in\N.
    \ee
    On the other hand, by condition \eqref{7.1}, there exists an  $N=N(\delta)>0$ such that 
    \ben
   &  d_X\big(U(t_N, t_N-T,  y_{N}), 
    S(T,  y_{N})\big)  
     \leq  \sup_{x\in B}d_X\big(U(t_N, t_N - T,  x), 
    S(T, x)\big)
    <\delta /2,  
    \ee 
    so,  by  \eqref{26.7},    
    \ben &
   \dist\big(  U (t_N, t_N - T,  y_{N}), \A\big )\\   &   \quad
   \leq  
       d_X  \big(U(t_N, t_N  - T,  y_{N}), 
   S(T,  y_{N})\big) \! +  \dist \big( S(T,   y_{N }), \A \big  )  
 < \delta  ,
    \ee
    which contradicts  \eqref{26.1}.  Hence   the theorem holds.
     \end{proof}

 \subsection{Backwards convergence in distant past}
 
   Now we turn to the backwards case.
   Firstly, analogously to Theorem \ref{li} we have the following backwards theorem.
   
\begin{theorem}  \label{16.3}
  Let $\mathfrak{A}$ be  the pullback attractor of a process $U$ and   $\A$  the global attractor of a semigroup $S$.
    Suppose  that 
     \begin{itemize}
     \item[(i)]  for any $\{x_t\}$ with $\lim_{t\to -\infty} d_X(x_t,x_0)=0$,
     \be \label{15.1}
 \lim_{t\to -\infty}   d_X\big(U(t,t- T,x_t), S (T, x_0)\big) = 0,\quad \forall  { T\in \R^+};
    \ee 
         \item[(ii)]   $\mathfrak{A}$ is backwards compact.
          \end{itemize}
         Then 
    $$
  \lim_{t\to-\infty}  \dist \big(A(t), \A \big)=0  .
  $$
  
    \end{theorem}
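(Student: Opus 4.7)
The plan is to mimic the contradiction argument in the proof of Theorem \ref{26.0}, but replace the uniformity in $x$ (coming from forward boundedness and the sup-version of \eqref{7.1}) by an extraction of a convergent subsequence (coming from backwards compactness and the pointwise-version \eqref{15.1}). Suppose the conclusion fails; then there exist $\delta>0$ and a sequence $t_n\to -\infty$ with $\dist(A(t_n),\A)>\delta$. Since each $A(t_n)$ is compact, pick $x_n\in A(t_n)$ realising the distance so that $d_X(x_n,\A)>\delta$.

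Next I would exploit backwards compactness to obtain a compact set $K$ with $\bigcup_{t\leq 0}A(t)\subset K$. Because $K$ is bounded and $\A$ is the global attractor of $S$, there is $T>0$ such that $\dist(S(T,K),\A)<\delta/2$. Invariance of $\mathfrak{A}$ then gives, for every sufficiently large $n$ (so that $t_n\leq 0$ and $t_n-T\leq 0$), some $y_n\in A(t_n-T)\subset K$ with $x_n=U(t_n,t_n-T,y_n)$. Compactness of $K$ lets me pass to a subsequence (not relabelled) with $y_n\to y_0$ for some $y_0\in K$.

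Finally I would apply hypothesis \eqref{15.1}: extending $\{y_n\}$ to a map $t\mapsto z_t$ with $z_{t_n}=y_n$ and $z_t\equiv y_0$ elsewhere gives $z_t\to y_0$ as $t\to -\infty$, so $d_X(U(t_n,t_n-T,y_n),S(T,y_0))\to 0$. Combined with the choice of $T$, the triangle inequality yields
\[
 d_X(x_n,\A)\leq d_X\bigl(U(t_n,t_n-T,y_n),S(T,y_0)\bigr)+\dist\bigl(S(T,y_0),\A\bigr)<\delta
\]
for all large $n$, contradicting $d_X(x_n,\A)>\delta$. The only subtle point is the last step: hypothesis \eqref{15.1} is phrased for a net $\{x_t\}_{t\in\R}$, whereas only the subsequence values $y_n$ at $t=t_n$ are naturally given. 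This is handled by the extension $z_t$ above (or equivalently by reading \eqref{15.1} sequentially), and this conversion between sequential and net formulations is the one place where care is required; the remainder of the argument is parallel to the forward theorem.
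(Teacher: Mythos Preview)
Your proposal is correct and follows essentially the same contradiction argument as the paper: pick $x_n\in A(t_n)$ realizing the distance, use backwards compactness to obtain a compact $K$ and a time $T$ with $\dist(S(T,K),\A)<\delta/2$, pull back via invariance to $y_n\in A(t_n-T)\subset K$, extract a convergent subsequence $y_n\to y_0$, and invoke \eqref{15.1} plus the triangle inequality for the contradiction. You are in fact slightly more explicit than the paper about the one genuine subtlety---converting the net-indexed hypothesis \eqref{15.1} into a sequential statement via the extension $z_t$---which the paper simply applies to the sequence $b_n\to b_0$ without comment.
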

    \begin{proof} 
 We prove  by contradiction.  
      Suppose that for some $\delta>0$ there exists a sequence $ t_n\to\infty$ such that 
    $$ \dist\big(A(-t_n), \A \big)\geq \delta,\quad \forall n\in \N . $$
  Then by the compactness of $\mathfrak{A}$ there exists a  sequence $x_n\in A(-t_n)$ such that 
    \be
    \label{15.3}
    \dist\big(x_n,  \A \big)=  \dist\big(A(-t_n), \A \big)\geq \delta,\quad \forall n\in \N .
    \ee
 
   Since $\mathfrak{A}$ is backwards compact, the set $B:=\overline{\cup_{t\leq 0} A(t)}$ is  compact. By the forward attraction of $\A$ under $S$, there exists a $T_0>0$ such that 
   \be \label{15.6}
    \dist\big(S(T_0,B), \A \big)<\delta/2.
   \ee
    Besides,
     by the invariance of $\mathfrak{A}$, for every $x_n$ there is   $b_{n} \in A(-t_n-T_0)\subset B$ such that 
    $$ x_n = U(-t_n, -t_n-T_0,  b_{n }),$$
   and    $ b_{n} \to b_0 $ as $n\to \infty$ for some   $b_0\in B $. Hence, by  condition \eqref{15.1},  there exists an $N=N(\delta)>0$ such that  
    \be \label{15.5}
       d_X\big(x_N, S(T_0, b_0 )\big) 
      = d_X \big(U(-t_N, -t_N-T_0, b_{N}), S(T_0, b_0 ) \big)    <  \delta/ 2.
    \ee
        Therefore,  from \eqref{15.5} and \eqref{15.6} it follows that,  
    \ben
    \dist\big(x_N, \A \big) &\leq  d_X\big(x_N, S(T_0, b_0)\big)  +
   \dist \big(S(T_0, b_0),  \A \big) <\delta, 
    \ee
    which contradicts \eqref{15.3}.  Hence we have the theorem.
    \end{proof}

     Next, we establish an alternative theorem using different conditions  thanks to which we can  further  obtain   the convergence in the full Hausdorff metric sense, not only in the semi-metric sense. To show this, it is convenient to begin with a more general   convergence, that of pullback attractors to pullback attractors.
    
\begin{proposition}\label{22.2}
    Suppose that $\mathfrak{A}=\{A(t)\}_{t\in\R}$ and $\mathfrak{A}_\infty=\{A_\infty(t)\}_{t\in\R}$ are pullback attractors of  processes $U$ and $U_\infty$, respectively.  If 
    \begin{itemize}
    \item[(i)]
     $\mathfrak{A}$ is backwards bounded, i.e. there is a bounded set $B$  such that $\cup_{t\leq 0} A(t)\subset B$;
     \item[(ii)] the following convergence holds
    \be \label{24.00}
    \sup_{x\in B, \tau\in \R^+}  d_X \big(U(t,t-\tau,x),U_\infty ( t,t-\tau, x)\big)\to 0,\quad\text{as }t\to -\infty,
    \ee
    \end{itemize}
    then 
    \ben
  \lim_{t\to-\infty}  \dist \big(A(t),A_\infty(t)\big)=0.
    \ee
    If, moreover,  $\mathfrak{A}_\infty$ is also backwards bounded in $B$, then the two attractors $\mathfrak{A} $ and $\mathfrak{A}_\infty$ are asymptotically identitical in distant past, i.e.,
    \ben
  \lim_{t\to-\infty}  \dist_H\big(A(t),A_\infty(t)\big)=0.
    \ee
    \end{proposition}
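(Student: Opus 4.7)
The plan is to argue by contradiction in the same spirit as the proofs of Theorems \ref{26.0} and \ref{16.3}, but to replace the compactness-extraction step of Theorem \ref{16.3} by a pullback-attraction step applied to the limiting attractor $\mathfrak{A}_\infty$ acting on the bounded set $B$. Suppose $\dist(A(t),A_\infty(t))\not\to 0$ as $t\to -\infty$; then there exist $\delta>0$ and a sequence $t_n\to -\infty$ together with points $x_n\in A(t_n)$, chosen by compactness of $A(t_n)$ to realize the semi-distance, such that $\dist(x_n,A_\infty(t_n))\geq \delta$ for all $n$.

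The construction has two ingredients. First, since $B$ is bounded and $\mathfrak{A}_\infty$ pullback attracts bounded sets, for each $n$ there is $\tau_n\geq 0$ with
\[
  \dist\!\bigl(U_\infty(t_n,t_n-\tau_n,B),\,A_\infty(t_n)\bigr)<\delta/2.
\]
Second, by invariance of $\mathfrak{A}$, and since $t_n-\tau_n\leq 0$ for $n$ large so that $A(t_n-\tau_n)\subset B$, one can write $x_n=U(t_n,t_n-\tau_n,y_n)$ for some $y_n\in A(t_n-\tau_n)\subset B$. Hypothesis (ii), applied to the pair $(y_n,\tau_n)\in B\times \R^+$, then yields
\[
  d_X\!\bigl(x_n,\,U_\infty(t_n,t_n-\tau_n,y_n)\bigr)\longrightarrow 0 \quad\text{as }n\to\infty.
\]
A triangle inequality combining the previous two displays gives $\dist(x_n,A_\infty(t_n))<\delta$ for all sufficiently large $n$, contradicting the choice of $x_n$, and hence proving the first convergence.

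For the full Hausdorff-metric statement, observe that $d_X$ is symmetric, so the convergence in \eqref{24.00} is unchanged when $U$ and $U_\infty$ are interchanged. If $\mathfrak{A}_\infty$ is also backwards bounded in $B$, the argument above applies verbatim with the roles of $\mathfrak{A}$ and $\mathfrak{A}_\infty$ swapped (using the invariance of $\mathfrak{A}_\infty$ and the pullback attraction of $\mathfrak{A}$ on $B$), giving $\dist(A_\infty(t),A(t))\to 0$ as $t\to -\infty$ and thus $\dist_H(A(t),A_\infty(t))\to 0$.

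The main technical point, and what makes the boundedness hypothesis (i) suffice where the earlier results used compactness, is that the times $\tau_n$ supplied by pullback attraction of $\mathfrak{A}_\infty$ on $B$ may be unbounded in $n$; transferring the attraction from $U_\infty$-trajectories to $U$-trajectories therefore relies essentially on the uniformity over \emph{all} $\tau\in \R^+$ that is built into \eqref{24.00}. Without this uniformity one would be forced to extract a convergent subsequence $y_n\to y_0$ in $B$, which would require either compactness of $\mathfrak{A}$ as in Theorem \ref{16.3} or a continuity hypothesis on $U_\infty$ at $y_0$, neither of which is assumed here.
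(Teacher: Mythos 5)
Your proof is correct and follows essentially the same route as the paper's: invariance of $\mathfrak{A}$ to pull $x_n$ back into $B$, the $\tau$-uniformity of \eqref{24.00} to pass from $U$- to $U_\infty$-trajectories, and pullback attraction of $\mathfrak{A}_\infty$ on $B$, with the symmetric swap for the Hausdorff-metric statement. The only difference is a cosmetic reordering of quantifiers (you pick the attraction time $\tau_n$ for each $n$ first and then let $n\to\infty$, whereas the paper fixes $N$ via \eqref{24.00} uniformly in $m$ and then chooses $m$ large).
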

    \begin{proof} 
    We first prove the first part  by contradiction.
    Suppose that for some $\delta>0$ there exists a sequence $ t_n\to\infty$ such that 
    $$ \dist\big(A(-t_n),A_\infty(-t_n)\big)\geq \delta,\quad \forall n\in \N . $$
  Then by the compactness of $\mathfrak{A}$ there exists a  sequence $x_n\in A(-t_n)$ such that 
    \be
    \label{24.22}
    \dist\big(x_n, A_\infty(-t_n)\big)=  \dist\big(A(-t_n),A_\infty(-t_n)\big)\geq \delta,\quad \forall n\in \N .
    \ee
     
    Since $\mathfrak{A}$ is invariant, for every $m,n\in \N$  we have a  $b_{n,m} \in A(-t_n-m)\subset B$ such that 
    $$ x_n= U(-t_n, -t_n-m, b_{n,m}),\quad \forall  n, m\in \N .$$
    Hence, by  condition \eqref{24.00}, there exists an $N=N(\delta)>0$ such that for all   $m\in \N$ 
    \be \label{24.33}
    &
   d_X\big(x_N,U_\infty (-t_N, -t_N-m, b_{N,m})\big) 
       \\
    &\quad =d_X\big(U(-t_N, -t_N-m, b_{N,m}),U_\infty (-t_N, -t_N-m, b_{N,m})\big) \\
    &\quad  \leq    \sup_{x\in B,\tau\in\R^+} \! d_X\big(U(-t_N, -t_N-\tau, x),U_\infty (-t_N, -t_N-\tau,x)\big)  <  \delta/ 2.
    \ee
    In addition,
    since $\{b_{n,m}\}\subset B$ is pullback attracted by $\mathfrak{A}_\infty$ under $U_\infty$, 
    there is an $M=M(N,\delta) >0$ such that 
    \be \label{24.44}
    & \dist \big(U_\infty (-t_N, -t_N-m, b_{N,m}), A_\infty(-t_N)\big)\\
    &\quad \leq
     \dist\big(U_\infty (-t_N, -t_N-m, B), A_\infty(-t_N)\big)<\delta/2 ,
     \quad \forall m\geq M.
    \ee
    Therefore,  from \eqref{24.33} and \eqref{24.44} it follows  that, for all $m\geq M$, 
    \ben
    \dist\big(x_N, A_\infty(-t_N)\big) &\leq  d_X\big(x_N, U_\infty (-t_N, -t_N-m, b_{N,m})\big)  \\
    &\quad +  
    \dist\big( U_\infty (-t_N, -t_N-m, b_{N,m}),  A_\infty(-t_N)\big) <\delta, 
    \ee
    which contradicts \eqref{24.22}. 
    
   In   case of  $\mathfrak{A}_\infty$ being also backwards bounded,
    exchaging the roles played by $\mathfrak{A}$ and $\mathfrak{A}_\infty$ we have     $
  \lim_{t\to-\infty}  \dist\big(A_\infty(t),A(t)\big)=0,
   $ by which the proof is completed. 
    \end{proof}

    As a corollary of Proposition \ref{22.2}, we have 
\begin{theorem} \label{26.00}
    Suppose that $U$ is a process with  pullback attractor $\mathfrak{A}=\{A(t)\}_{t\in\R}$   and that $S$  is a semigroup with  global attractor $\A$.   If
    \begin{itemize}
    \item[(i)]
     $\mathfrak{A}$ is backwards bounded, i.e. there is a bounded set $B$  such that $\cup_{t\leq 0} A(t)\subset B$;
     \item[(ii)] the following backwards asymptotically autonomous condition    holds
    \be  \label{16.4}
    \sup_{x\in B, \tau\in \R^+} d_X \big(U(t,t-\tau,x),S( \tau, x)\big)\to 0,\quad\text{as }t\to -\infty,
    \ee
    \end{itemize}
   then the global attractor  $\A$  is  the $\alpha$-limit set of the pullback attractor $\mathfrak{A} $, i.e.,
    \ben
  \lim_{t\to-\infty}  \dist_H\big(A(t), \A\big)=0.
    \ee
    \end{theorem}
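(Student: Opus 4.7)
The plan is to deduce Theorem~\ref{26.00} as an immediate corollary of Proposition~\ref{22.2} by recasting the autonomous semigroup $S$ as a (time-independent) process. Define
\[
 U_\infty(t,s,x):=S(t-s,x),\qquad (t,s)\in\R^2_\geq,\ x\in X;
\]
this manifestly satisfies the process axioms, and since $S$ has global attractor $\A$, its pullback attractor is the constant family $\mathfrak{A}_\infty=\{\A\}_{t\in\R}$, i.e.\ $A_\infty(t)=\A$ for every $t\in\R$. Under this identification $U_\infty(t,t-\tau,x)=S(\tau,x)$, so the backwards asymptotically autonomous hypothesis~\eqref{16.4} is exactly condition~(ii) of Proposition~\ref{22.2} for the pair $(U,U_\infty)$, while condition~(i) transfers verbatim.

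Applying the first conclusion of Proposition~\ref{22.2} thus immediately yields the semi-metric convergence $\lim_{t\to-\infty}\dist(A(t),\A)=\lim_{t\to-\infty}\dist(A(t),A_\infty(t))=0$. For the Hausdorff conclusion we invoke the second half of Proposition~\ref{22.2}, which further requires $\mathfrak{A}_\infty$ to be backwards bounded in $B$. Since $\A$ is compact (hence bounded), this reduces to the inclusion $\A\subset B$, which may be arranged by replacing $B$ with $B\cup\A$; the enlargement is still bounded and clearly preserves the backwards absorption in~(i). Proposition~\ref{22.2} then delivers $\lim_{t\to-\infty}\dist_H(A(t),\A)=0$, as required.

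The only step requiring genuine attention is confirming that the uniform convergence in~\eqref{16.4} survives enlarging $B$ to contain $\A$, i.e.\ that $\sup_{x\in\A,\,\tau\in\R^+}d_X(U(t,t-\tau,x),S(\tau,x))\to 0$ as $t\to-\infty$. In standard applications $B$ is chosen as a pullback absorbing set, which automatically contains $\A$, so the issue evaporates; in the abstract setting one exploits compactness of $\A$ together with the invariance $S(\tau,\A)=\A$ to propagate the uniform estimate from $B$ onto $\A$. Apart from this single verification, the entire argument is formal bookkeeping that embeds the autonomous limit data into the non-autonomous framework of Proposition~\ref{22.2}.
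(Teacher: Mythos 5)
Your reduction is exactly the paper's proof: set $U_\infty(t,s,x):=S(t-s,x)$, note that its pullback attractor is the constant family $A_\infty(t)\equiv\A$, and invoke Proposition \ref{22.2}. You are in fact more careful than the paper, which applies the second (Hausdorff) half of Proposition \ref{22.2} without comment, even though that half requires $\mathfrak{A}_\infty$ to be backwards bounded \emph{in the same set} $B$, i.e. $\A\subset B$ --- an inclusion the hypotheses of Theorem \ref{26.00} do not supply.

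However, the repair you sketch for the abstract setting does not work. Enlarging $B$ to $B\cup\A$ preserves (i), but \eqref{16.4} is a hypothesis about the specific set $B$, and there is no mechanism for ``propagating'' it to $\A$: compactness of $\A$ and the invariance $S(\tau,\A)=\A$ say nothing about $U(t,t-\tau,x)$ for $x\in\A\setminus B$ (recall the paper assumes no continuity of $U$ in $x$, and even with continuity $\A$ need not lie near $B$). The gap is genuine: on $X=\R$ let $U$ be the process generated by $\dot u=-u$, so $A(t)\equiv\{0\}$ and (i) holds with $B=\{0\}$, and let $S$ be the semigroup of $\dot u=u-u^3$, whose global attractor is $\A=[-1,1]$. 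Since $0$ is a fixed point of both systems, the supremum in \eqref{16.4} is identically zero, yet $\dist_H\big(A(t),\A\big)=1$ for all $t$. Thus the Hausdorff conclusion really needs the extra assumption that \eqref{16.4} holds on a bounded set containing $\A$ (equivalently, that one may take $\A\subset B$); only the semi-metric statement $\lim_{t\to-\infty}\dist\big(A(t),\A\big)=0$ follows from (i)--(ii) as written. Your observation that in applications $B$ is an absorbing set, hence contains $\A$, is the correct way out, and is what happens implicitly in the paper's Section 4, where the analogue of \eqref{16.4} is verified uniformly over every bounded set.
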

    \begin{proof}
    Define $U_\infty(t,s,x) \! :=\!  S(t-s,x)$ for  $t\geq s$ and $x\in X$, then    $U_\infty$ is a process with pullback attractor $\mathfrak{A}_\infty$ with $A_\infty(t)\equiv \A$. Hence, by Proposition  \ref{22.2} the theorem  follows.   
    \end{proof}
    
    \section{Necessary conditions}
  
  In the previous section we established sufficient  conditions ensuring the convergence of pullback attractors towards global attractors. In this section, we study necessary conditions.  We  first recall a locally uniform compactness.
   
   \begin{definition}(\cite{cui18pd})
   A  family 
    $\mathfrak E=\{E_t\}_{t\in \R}$   of nonempty compact sets is said to be \emph{locally uniformly compact},  if for any bounded interval $I\subset \R$ the union $\cup_{t\in I}E_t $ is precompact.
   \end{definition}
   For the pullback attractor $\mathfrak A$ of a process $U$, the locally uniform compactness   of $\mathfrak A$ is often trivial since the mapping $t\mapsto A(t)$ is often continuous  
   in the full Hausdorff metric sense, provided that $s\to U(s,\tau ,x)$ is continuous, see    
\cite[p31]{kloeden11},  also \cite{li17dcds}.  In the framework of random attractors Cui et al$.$ \cite{cui18pd} studied the case without the continuity in $s$.
   
   The following proposition indicates that, with the locally uniform compactness,    a pullback attractor $\mathfrak A$ forward   converges to  a global attractor $\A$ implies that  $\mathfrak A$  is forward   compact.  Analogously, the backwards convergence and backwards compactness have the same relationship. Hence, roughly, forward and backward compactnesses  of pullback attractors can be necessary conditions of the corresponding convergences towards global attractors.  Forward and backward boundednesses can be necessary conditions as well. 

\begin{proposition}
Suppose that  $\{E_t\}_{t\in \R}$   is    a  locally uniformly compact family of nonempty compact sets in $X$. 
 Then  there is a  nonempty compact  set $E$ such that $\lim\limits_{t\to\infty}\dist( E_t ,E)=0 $ if and only if   $\{E_t\}_{t\in \R}$   is forward compact.
\end{proposition}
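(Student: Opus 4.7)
The plan is to handle the two implications separately. Both are elementary, and essentially all the work lives in the $(\Rightarrow)$ direction, where a semi-metric attraction statement must be converted into a total boundedness statement.

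For the $(\Leftarrow)$ direction, forward compactness supplies a compact $B$ with $\bigcup_{t\geq 0}E_t\subset B$. I would simply take $E:=\overline{\bigcup_{t\geq 0}E_t}$, which is nonempty since each $E_t$ is, and which is a closed subset of the compact set $B$, hence compact. By construction $E_t\subset E$ for every $t\geq 0$, so $\dist(E_t,E)=0$ for all such $t$, and in particular $\lim_{t\to\infty}\dist(E_t,E)=0$.

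For the $(\Rightarrow)$ direction, assume $E$ is nonempty and compact with $\dist(E_t,E)\to 0$. I would prove the stronger claim that $\overline{\bigcup_{t\geq 0}E_t}$ is compact, by showing the union is totally bounded; since $X$ is complete this gives precompactness, and forward compactness follows by taking the closure as the required compact set. Fix $\varepsilon>0$. From $\dist(E_t,E)\to 0$ pick $T=T(\varepsilon)>0$ so that $E_t$ lies in the open $\varepsilon$-neighbourhood $N_\varepsilon(E)$ for all $t\geq T$. Since $E$ is compact it has a finite $\varepsilon$-net, which yields a finite cover of $N_\varepsilon(E)$ by balls of radius $2\varepsilon$, hence a finite $2\varepsilon$-net for $\bigcup_{t\geq T}E_t$. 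Meanwhile, local uniform compactness applied to the bounded interval $[0,T]$ makes $\bigcup_{0\leq t\leq T}E_t$ precompact, hence totally bounded, so it also admits a finite $2\varepsilon$-net. Merging the two finite sets of centres produces a finite $2\varepsilon$-net for $\bigcup_{t\geq 0}E_t$, proving total boundedness.

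There is no genuine obstacle; the only care needed is the bookkeeping that splits the union into the finite portion $[0,T]$, handled by local uniform compactness, and the tail $[T,\infty)$, handled by the attraction to compact $E$. It is worth noting that the compactness of $E$ is used in an essential way (not just its boundedness), since only then can $N_\varepsilon(E)$ be covered by finitely many $2\varepsilon$-balls — this is what makes $E$ being merely a bounded attracting set insufficient for forward compactness.
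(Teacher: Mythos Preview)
Your proof is correct and follows essentially the same approach as the paper: the $(\Leftarrow)$ direction is identical, and for $(\Rightarrow)$ both arguments split $\bigcup_{t\geq 0}E_t$ into a bounded-time part $[0,T]$ handled by local uniform compactness and a tail $[T,\infty)$ handled by attraction to the compact set $E$. The only cosmetic difference is that you phrase precompactness via total boundedness ($\varepsilon$-nets), whereas the paper argues via sequential compactness (any sequence $x_n\in E_{t_n}$ has a convergent subsequence, splitting on whether $t_n\to\infty$ or $t_n$ stays bounded); these are equivalent in a complete metric space and do not constitute a genuinely different route.
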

\begin{proof}
\emph{Sufficiency}.  The sufficiency is clear taking $E:=\overline{\cup_{t\geq 0}E_t}$ . 

\emph{Necessity.}   
To prove the forward compactness,  for any   a  sequence $\{x_n\}_{n\in\N}\subset \cup_{t\geq 0}E_t$ we need to prove that $\{x_n\}$ has a convergent subsequence.  Since each $E_t$ is compact, without loss of generality we suppose that there exists increasing  sequence $\{t_n\}_{n\in\N} \subset  [0,\infty) $ such that $x_n\in E_{t_n}$ for each  $n\in\N$. Then two possibilities occur. If $t_n\to\infty$, then as $\dist(x_n,E)\leq \dist(E_{t_n},E)\to 0$ and $E$ is compact,  $\{x_n\}$ has indeed a convergent subsequence;
if 
$\sup_{n\in\N}t_n <a<\infty$, then because of $\{x_n\}\subset \cup_{t\in[0,a]}E_t$ being precompact, $\{x_n\}$ has convergent subsequences as well. 
Hence we have the proposition. 
\end{proof}

     \section{An example of a reaction-diffusion equation on unbounded domain}

     Consider the following   non-autonomous reaction-diffusion equation on $\R$
     \be \label{eq}
     \frac{\d u}{\d t} -\triangle u +\lambda u +f(u)=g(x,t),
     \ee
     with initial condition 
     \be
     u(\tau)=u_0,
     \ee
  where $\lambda >0$ and  the nonlinearity $f\in C^1(\R,\R)$ satisfying   
  \be \label{f}
  f(u)u\geq 0,\quad f(0)=0,\quad f'(u)\geq -c, \quad |f'(u)|\leq c(1+|u|^p)
  \ee 
  with $p\geq 0$. The non-autonomous forcing $g\in L^2_{loc}\big(\R,L^2(\R)\big)$ satisfies  the tempered condition
  \be
  \int^0_{-\infty} e^{\lambda s} \|g(s) \|^2\ \d s <\infty.
  \ee   Consider also the autonomous case with the same conditions
    \be \label{eq2}
     \frac{\d u}{\d t} -\triangle u +\lambda u +f(u)=g_0(x),
     \ee
   where $g_0(x)\in L^2(\R)$. 
  
  It is well-known that the systems  \eqref{eq} and \eqref{eq2} have unique solutions, see, e.g.,  \cite{robinson01,wang07jmaa,wang09fmc,zhu16aml}.
   Besides, under some conditions of $f$ and $g$, the proof of the existence of the pullback attractor  is quite standard, see for instance \cite{cui17dcds}.  For brevity we will not pursue the details here, but   assume  it  and  focus on the asymptotic autonomy   of the pullback attractor.
  
\begin{assumption}
Assume that the systems  \eqref{eq} and \eqref{eq2} both have unique solutions,  and have a  pullback attractor $\mathfrak{A}=\{A(t)\}_{t\in\R}$ and a global attractor $\A$  in $L^2(\R)$,  respectively.
\end{assumption}
The  unique existence of solutions implies that the mapping $U(t,\tau,u_0):=u(t,\tau,u_0)$, $t\geq \tau$, corresponding to the solutions $u$ of \eqref{eq} defines a process, and   $S(s,u_{2,0}): =u_2(s,0,u_{2,0})$ $=u_2(s+\tau,\tau,u_{2,0})$, $s\geq 0,\tau\in \R$, corresponding to the solutions $u_2$ of \eqref{eq2} defines a semigroup.

\begin{lemma}
Any  solution $u(t,\tau,u_0)$ of problem \eqref{eq} satisfies 
 \ben
    \|u(t,\tau, u_0)\|^2 \leq e^{\lambda (\tau-t)}\|u_0\|^2 +c \int^t_{\tau} e^{\lambda (s-t)} \|g(s)\|^2\ \d s,\quad \forall t\geq \tau.
    \ee 
\end{lemma}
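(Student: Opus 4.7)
The plan is a standard energy estimate followed by Gronwall's inequality. First I would take the $L^2(\mathbb R)$ inner product of equation \eqref{eq} with the solution $u$. Using integration by parts (and the fact that $u(t,\cdot)\in H^1(\mathbb R)$ with decay at infinity, which is part of the standard well-posedness framework being assumed) this yields
\be
\tfrac{1}{2}\tfrac{\d}{\d t}\|u\|^2 + \|\nabla u\|^2 + \lambda\|u\|^2 + \int_{\mathbb R} f(u)\,u\,\d x = \int_{\mathbb R} g(x,t)\,u\,\d x.
\ee

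Next I would discard the two nonnegative terms on the left. The term $\|\nabla u\|^2\geq 0$ is dropped trivially, and by the sign condition $f(u)u\geq 0$ from \eqref{f} the integral $\int f(u)u\,\d x$ is nonnegative and can also be dropped. For the right-hand side, Cauchy--Schwarz together with Young's inequality gives
\be
\int_{\mathbb R} g(x,t)\,u\,\d x \leq \tfrac{\lambda}{2}\|u\|^2 + \tfrac{1}{2\lambda}\|g(t)\|^2,
\ee
so that, absorbing $\tfrac{\lambda}{2}\|u\|^2$ into the left-hand side, one obtains
\be
\tfrac{\d}{\d t}\|u\|^2 + \lambda\|u\|^2 \leq c\,\|g(t)\|^2,
\ee
with $c=1/\lambda$ (or some constant depending only on $\lambda$).

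Finally I would multiply by the integrating factor $e^{\lambda t}$ to get $\tfrac{\d}{\d t}\bigl(e^{\lambda t}\|u\|^2\bigr)\leq c\,e^{\lambda t}\|g(t)\|^2$, then integrate from $\tau$ to $t$ and divide by $e^{\lambda t}$ to obtain exactly the stated estimate. The only mild obstacle is a rigor issue: the computation above should be justified either by testing a Galerkin approximation and passing to the limit, or by working within the regularity class guaranteed by the well-posedness assumption; since the lemma is stated under the blanket assumption that solutions exist and are unique, this step is routine and would not be belabored.
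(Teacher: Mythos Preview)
Your argument is correct and matches the paper's own proof essentially line for line: take the $L^2$ inner product with $u$, drop the nonnegative terms $\|\nabla u\|^2$ and $(f(u),u)$, use Young's inequality to absorb half of $\lambda\|u\|^2$, and apply Gronwall. If anything, you give more detail than the paper does.
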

   \begin{proof} Taking the inner product of \eqref{eq} with $u$ in $L^2(\R)$ we have
    \ben
    \frac 12 \frac{\d}{\d t} \|u\|^2 +\|\nabla u\|^2+\lambda \|u\|^2+\big(f(u),u \big)=\big(g(t),u \big),
    \ee
    which follows that 
    $$
    \frac{\d}{\d t} \|u\|^2  +\lambda \|u\|^2\leq c \|g(t)\|^2 .
$$
    By Gronwall's inequality we have 
    the result.
   \end{proof}
   
    Hence, define $\mathfrak{B}=\{ B(t)\}_{t\in\R}$ with 
   \be\label{B}
   B(t):=\bigg \{u\in L^2(\R): \|u\|^2\leq c \int^t_{-\infty} e^{\lambda (s-t)} \|g(s)\|^2\ \d s+1\bigg\},\quad \forall t\in \R .
   \ee
   Then $\mathfrak{B}$ is a pullback absorbing set containing the pullback attractor, i.e., $A(t)\subset B(t)$. 
 
 Now we study the forward and backward convergences of the pullback attractor $\mathfrak A$ towards the global attractor $\A$. Naturally, different conditions of the non-autonomous forcing $g(x,t)$ will be required, but we do not need particular techniques to obtain further compactness of the pullback attractor since  our theorems  only need boundedness conditions.  
 
   \begin{theorem}[Forward convergence] 
   \label{th1}
If
 \be \label{cond_g}
\lim_{\tau\to\infty} \int^\infty_\tau \|g(s)-g_0\|^2\ \d s =0 ,
\ee
 then
   \ben
   \lim_{t\to\infty} \dist \big(A(t),\A\big)=0.
   \ee
   \end{theorem}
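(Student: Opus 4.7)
The plan is to verify the two hypotheses of Theorem~\ref{26.0} for the process $U$ generated by \eqref{eq} and the semigroup $S$ generated by \eqref{eq2}: the forward boundedness of $\mathfrak{A}$, and the uniform asymptotic-autonomy condition \eqref{7.1} on the resulting forward absorbing set. Everything reduces to an energy estimate on the difference of two solutions.

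For forward boundedness, I use the pullback absorbing family $\mathfrak{B}=\{B(t)\}_{t\in\R}$ from \eqref{B}, which already satisfies $A(t)\subset B(t)$. Splitting
\ben
\int^t_{-\infty} e^{\lambda(s-t)}\|g(s)\|^2\,\d s = \int^0_{-\infty} e^{\lambda(s-t)}\|g(s)\|^2\,\d s + \int^t_0 e^{\lambda(s-t)}\|g(s)\|^2\,\d s,
\ee
the first integral is bounded uniformly in $t\geq 0$ by the tempered hypothesis (since $e^{\lambda(s-t)}\leq e^{\lambda s}$ for $s\leq 0\leq t$), while the second is controlled via $\|g(s)\|^2\leq 2\|g(s)-g_0\|^2 + 2\|g_0\|^2$ together with $\int^\infty_0\|g(s)-g_0\|^2\,\d s <\infty$, which follows from \eqref{cond_g}. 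This produces a bounded set $B\subset L^2(\R)$ with $\bigcup_{t\geq 0} A(t)\subset B$, giving hypothesis (i) of Theorem~\ref{26.0}.

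To establish \eqref{7.1}, I fix $T>0$, $x\in B$, and $t\in\R$, and compare $u(s)=U(s,t,x)$ with $v(s)=S(s-t,x)$ on $s\in[t,t+T]$. The difference $w=u-v$ solves $w(t)=0$ and
\ben
\partial_s w -\triangle w + \lambda w + f(u)-f(v) = g(s) - g_0.
\ee
Testing with $w$ in $L^2(\R)$, discarding $\|\nabla w\|^2$, using $f'\geq -c$ to conclude $(f(u)-f(v),w)\geq -c\|w\|^2$, and applying Cauchy--Schwarz followed by Young's inequality on the right-hand side, I obtain
\ben
\frac{\d}{\d s}\|w(s)\|^2 \leq \alpha\|w(s)\|^2 + \|g(s)-g_0\|^2, \quad s\geq t,
\ee
for a constant $\alpha$ depending only on $\lambda$ and $c$. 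Gronwall together with $w(t)=0$ then gives
\ben
\|w(t+T)\|^2 \leq e^{\alpha T}\int^{t+T}_t \|g(s)-g_0\|^2\,\d s \leq e^{\alpha T}\int^\infty_t\|g(s)-g_0\|^2\,\d s,
\ee
which is independent of $x\in B$ and converges to $0$ as $t\to\infty$ by \eqref{cond_g}. This confirms \eqref{7.1}, and Theorem~\ref{26.0} yields the claimed convergence.

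The only potential obstacle --- really a minor point --- is uniformity of the comparison bound over $x\in B$: this is handled by the vanishing initial datum $w(t)=0$, which removes every trace of $x$ from the final estimate and makes the Gronwall prefactor $e^{\alpha T}$ harmless even if $\alpha$ turns out positive. This is precisely what allows the argument to proceed with mere boundedness of $B$, in contrast with compactness-based approaches which would require delicate tail estimates on the unbounded domain $\R$ to first establish forward compactness of $\mathfrak A$.
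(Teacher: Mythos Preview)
Your proof is correct and follows essentially the same strategy as the paper: verify the two hypotheses of Theorem~\ref{26.0} by (a) an energy estimate on the difference $w=u-v$ yielding $\tfrac{\d}{\d s}\|w\|^2\le c\|w\|^2+\|g(s)-g_0\|^2$ and Gronwall, and (b) forward boundedness of the absorbing family $\mathfrak B$ via a splitting of $\int^t_{-\infty}e^{\lambda(s-t)}\|g(s)\|^2\,\d s$ controlled by the tempered hypothesis and \eqref{cond_g}. The only differences are cosmetic: you split the boundedness integral at $0$ rather than at a large $N$, and you parametrize the Gronwall step on $[t,t+T]$ instead of $[t-T,t]$.
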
 
   \begin{proof} 
    Let  $B$ be any nonempty bounded set, and let $u_1(t,\tau,x)$ and $u_2(t,\tau,x) $ be the  solutions of \eqref{eq} and \eqref{eq2}, respectively, with the same initial value $x_0 \in B $ at $\tau$. Then the difference $w(t,\tau,0):=u_1(t,\tau,x)-u_2(t,\tau,x)$  satisfies 
    \be \label{31.1}
     \frac{\d w}{\d t} -\triangle w +\lambda w +f(u_1)-f(u_2)=g(t)-g_0,
     \ee
     with initial value $w(\tau)=0$. Taking the inner product with $w$ in $L^2(\R)$ we have 
         \ben
    \frac 12 \frac{\d}{\d t} \|w\|^2 +\|\nabla w\|^2+\lambda \|w\|^2+\big(f(u_1)-f(u_2),w\big)=\big(g(t)-g_0,w\big) .
    \ee
   Hence, by \eqref{f} and Young's inequality,
       \be \label{31.3}
    \frac{\d}{\d t} \|w\|^2 \leq c\|w\|^2+ \|g(t)-g_0\|^2.
    \ee
    Notice that  condition \eqref{cond_g} implies 
    \be \label{g1}
   \lim_{t\to\infty} \int^t_{t-T}  \|g(s)-g_0\|^2\ \d s =0,\quad \forall T>0,
   \ee
 so by Gronwall's inequality we have, for any $T>0$,
    \ben
    \|w(t,t-T,  0)\|^2 & \leq  c \int^t_{t-T} e^{c(t-s)} \|g(s)-g_0\|^2\ \d s \\
    & \leq  c e^{cT}\int^t_{t-T}  \|g(s)-g_0\|^2\ \d s \to 0 , \quad \text{as } t\to \infty,
    \ee 
    i.e.,  condition (ii) in Theorem \ref{26.0} is satisfied.
    
    In the following, by Theorem \ref{26.0},  we  only need to show the forward boundedness of the pullback attractor. 
  By condition  \eqref{cond_g}, there exists an 
    $N\in \N$ such that
    \be
     \int^\infty_N \|g(s)-g_0\|^2\ \d s \leq 1.
    \ee
    Therefore,
     \ben
  & \sup_{t\geq N}  \int^t_{-\infty} e^{\lambda (s-t)} \|g(s)\|^2\ \d s 
   \leq  \sup_{t\geq N}  \int^t_{-\infty} e^{\lambda (s-t)}  \Big(\|g(s)-g_0\|^2 + \|g_0\|^2 \Big) \d s \\
    &\quad \leq  \sup_{t\geq N}  \int^t_{-\infty} e^{\lambda (s-t)} \|g(s)-g_0\|^2\ \d s  +\frac{ \|g_0\|^2}{\lambda } \\
    &\quad \leq  \sup_{t\geq N} \bigg( \int^N_{-\infty} e^{\lambda (s-t)} \|g(s)-g_0\|^2\ \d s +\int^t_N \|g(s)-g_0\|^2\ \d s\bigg) \!+\frac{ \|g_0\|^2}{\lambda }\\
      &\quad \leq    \int^N_{-\infty} e^{\lambda s} \|g(s)-g_0\|^2\ \d s + 1  +\frac{ \|g_0\|^2}{\lambda }<\infty.
    \ee
 This implies that the pullback absorbing set $\mathfrak{B}$ given by \eqref{B} is forward bounded, and so is the pullback attractor.    
   \end{proof}

   \begin{theorem}[Backwards convergence]
   \label{th2}
    If  \be\label{cond_g2}
    \lim_{\tau\to-\infty} \int^\tau_{-\infty} \|g(s)-g_0\|^2\ \d s =0 ,
   \ee
 then the global attractor $\A$ is the $\alpha$-limit set of the pullback attractor $\mathfrak A$, i.e., 
   \ben
   \lim_{t\to-\infty} \dist_H\big(A(t),\A\big)=0.
   \ee
   \end{theorem}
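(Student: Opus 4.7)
The plan is to deduce Theorem \ref{th2} from Theorem \ref{26.00}: since that theorem yields convergence in the full Hausdorff metric, it suffices to verify (i) the backwards boundedness of $\mathfrak A$ and (ii) the backwards asymptotically autonomous condition \eqref{16.4} in the present reaction--diffusion setting.

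For (i), I would use the pullback absorbing family $\mathfrak B$ of \eqref{B}, which contains $\mathfrak A$, so it is enough to show that $\sup_{t\leq 0}\int_{-\infty}^t e^{\lambda(s-t)}\|g(s)\|^2\,\d s<\infty$. Writing $\|g\|^2\leq 2\|g-g_0\|^2+2\|g_0\|^2$, the constant term contributes the uniform bound $2\|g_0\|^2/\lambda$. For the perturbation part, since $e^{\lambda(s-t)}\leq 1$ for $s\leq t$, it is enough to control $\sup_{t\leq 0}\int_{-\infty}^t\|g-g_0\|^2\,\d s$. Condition \eqref{cond_g2} supplies some $N<0$ with $\int_{-\infty}^N\|g-g_0\|^2\,\d s\leq 1$, and the local integrability of $g$ takes care of the remaining finite piece on $[N,0]$. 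This produces a single bounded set $B$ with $\bigcup_{t\leq 0}A(t)\subset B$.

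For (ii), fix $x\in B$, $\tau\in\R^+$ and $t\leq 0$, and let $w$ solve the difference equation \eqref{31.1} with $w(t-\tau)=0$. Testing with $w$ in $L^2(\R)$, bounding the nonlinear term from below by $-c\|w\|^2$ via $f'(u)\geq -c$, and applying Young's inequality on $(g-g_0,w)$ so as to absorb part of it into the $\lambda\|w\|^2$ term, I would derive a dissipative estimate of the form
\[
\frac{\d}{\d s}\|w\|^2+\mu\|w\|^2\leq C\|g(s)-g_0\|^2,\qquad \mu>0.
\]
Gronwall then yields
\[
\|w(t)\|^2\leq C\int_{t-\tau}^t e^{-\mu(t-s)}\|g(s)-g_0\|^2\,\d s\leq C\int_{-\infty}^t\|g(s)-g_0\|^2\,\d s,
\]
which is independent of $\tau\in\R^+$ and, by \eqref{cond_g2}, tends to zero as $t\to-\infty$. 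This is exactly the backwards condition \eqref{16.4}.

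The principal obstacle is precisely this uniformity in $\tau$, which is the key new feature of the backwards statement compared with its forward counterpart. The crude bound $\frac{\d}{\d s}\|w\|^2\leq c\|w\|^2+\|g-g_0\|^2$ used in the proof of Theorem \ref{th1} carries an $e^{c\tau}$ factor that is harmless for fixed $\tau$ but disastrous when $\tau$ ranges over $\R^+$. The remedy is to exploit the $\lambda u$ term---the only source of linear dissipation available on the whole line---rather than discarding it; this converts the exponential growth in the Gronwall kernel into exponential decay, after which condition \eqref{cond_g2} is exactly what is needed to close the argument and deliver convergence in the full Hausdorff metric $\dist_H$.
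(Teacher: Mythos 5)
Your proposal follows the same skeleton as the paper's proof: reduce to Theorem \ref{26.00}, obtain backwards boundedness of $\mathfrak A$ from the absorbing family $\mathfrak B$ of \eqref{B} by splitting $\|g\|^2$ into $\|g-g_0\|^2+\|g_0\|^2$ and using \eqref{cond_g2}, and verify \eqref{16.4} by an energy estimate on the difference $w$ followed by Gronwall. The boundedness part matches the paper's computation essentially line for line (the paper picks $M$ with $\int_{-\infty}^{-M}\|g-g_0\|^2\,\d s\leq 1$ and bounds $\sup_{t\leq -M}\int_{-\infty}^{t}e^{\lambda(s-t)}\|g(s)\|^2\,\d s$; your treatment of the remaining finite window is a harmless addition).

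The one substantive divergence is the Gronwall step in (ii), and you have correctly put your finger on the delicate point: the sup over $\tau\in\R^+$ in \eqref{16.4} rules out the crude inequality \eqref{31.3}, whose kernel $e^{c(t-s)}$ blows up as $\tau\to\infty$. The paper simply writes the bound $\|w(t,t-T,0)\|^2\leq\int_{t-T}^{t}e^{-c(t-s)}\|g(s)-g_0\|^2\,\d s$, whose decaying kernel does not follow from \eqref{31.3} as stated; your remedy of retaining the $\lambda\|w\|^2$ term is the right instinct and makes the mechanism explicit. However, your claimed dissipative inequality $\frac{\d}{\d s}\|w\|^2+\mu\|w\|^2\leq C\|g(s)-g_0\|^2$ with $\mu>0$ only comes out if $\lambda-c>0$ (after Young's inequality one gets $\mu=\lambda-c-\varepsilon$ at best, since on $\R$ there is no Poincar\'e inequality to extract extra dissipation from $\|\nabla w\|^2$), and no relation between $\lambda$ and the constant $c$ of \eqref{f} is assumed. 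So your argument, like the paper's, is only complete under an additional hypothesis such as $\lambda>c$ (or a sign condition like $f'\geq 0$); you should state this explicitly rather than assert $\mu>0$ unconditionally. Apart from making that hypothesis visible, the proof is the paper's proof.
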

   \begin{proof}
    Let  $B$ be any nonempty bounded set, and let $u_1(t,\tau,x)$ and $u_2(t,\tau,x) $ be the  solutions of \eqref{eq} and \eqref{eq2}, respectively, with the same initial value $x \in B $ at $\tau$. Then the difference $w(t,\tau,0):=u_1(t,\tau,x)-u_2(t,\tau,x)$   satisfies  \eqref{31.3}, i.e.,
    \ben 
    \frac{\d}{\d t} \|w\|^2 \leq c\|w\|^2+ \|g(t)-g_0\|^2.
    \ee
    Since the initial value of  $w $ is always zero,   by Gronwall's inequality we have 
    \ben
    \|w(t,t-T,0)\|^2& \leq \int^t_{t-T} e^{-c(t-s)}\|g(s)-g_0\|^2\ \d s\\
    & \leq \int^t_{-\infty}  \|g(s)-g_0\|^2\ \d s,\quad  \forall T>0.
    \ee
    Thus,  $\sup_{T>0}\|w(t,t-T,0)\|\to0$ as ${t\to -\infty }$, i.e., condition (ii) of Theorem \ref{26.00} holds. 
     
    Next, we need to establish the backward boundedness of $\mathfrak{A}$, i.e.,  condition (i) of Theorem \ref{26.00}.  Since by \eqref{cond_g2} there exists an $M\in \N$ such that 
    \ben
     \int^{-M}_{-\infty} \|g(s)-g_0\|^2  \ \d s  \leq  1,
    \ee
    we have 
    \ben
  & \sup_{t<-M}  \int^t_{-\infty} e^{\lambda (s-t)} \|g(s)\|^2\ \d s 
   \leq \sup_{t\leq -M}  \int^t_{-\infty} e^{\lambda (s-t)} \Big(\|g(s)-g_0\|^2+ \|g_0\|^2\Big) \d s \\
    &\qquad   \leq \sup_{t\leq -M}  \int^t_{-\infty} e^{\lambda (s-t)}  \|g(s)-g_0\|^2 \ \d s +   \sup_{t\leq -M}  \int^t_{-\infty} e^{\lambda (s-t)}   \|g_0\|^2\ \d s  \\
    &\qquad   \leq   \int^{-M}_{-\infty}  \|g(s)-g_0\|^2 \  \d s +   \frac 1\lambda \int^0_{-\infty} e^{\lambda s}   \|g_0\|^2\ \d s <\infty  .
   \ee
   Hence, the pullback absorbing set $\mathfrak{B}$ given by \eqref{B} is backward bounded, and so is the pullback attractor.
   \end{proof}
   
 \begin{remark}
 Slightly modifing the proofs, the conditions \eqref{cond_g} and \eqref{cond_g2} in Theorems \ref{th1} and \ref{th2} can be replaced, respectively,  with 
  \begin{gather} \nonumber
    \lim_{t\to\infty}\|g(t)-g_0\|=0   \quad \text{and } \ \
      \lim_{t\to -\infty}\|g(t)-g_0\|=0  .
   \end{gather}
 \end{remark}
  
     \section{Endnotes}
     In this paper we studied an asymptotically autonomous problem of non-autonomous dynamical systems in terms of  the convergences of the  pullback attractor    towards  the global attractor of the limiting  semigroup. By boundedness conditions we established criteria  Theorems \ref{26.0} and \ref{26.00},  which  weakened the corresponding compactness conditions in Theorems \ref{li} and  \ref{16.3}, 
   with  asymptotic autonomy conditions \eqref{li01} and \eqref{15.1} also modified to conditions \eqref{7.1} and \eqref{16.4}, respectively.  In addition,  as an advantage of \eqref{16.4} over \eqref{15.1}, in Theorem \ref{26.00} we obtained backward convergence in the full Hausdorff metric sense.  All the analysis in this paper does not require any   continuity conditions of the dynamical systems.
   
   In view of applications, Theorems \ref{26.0} and \ref{26.00} can be much easier to apply than  Theorems \ref{li} and \ref{16.3}. First,    boundedness conditions are clearly much easier to establish than compactness conditions; second,  conditions  \eqref{7.1} and \eqref{16.4} in Theorems \ref{26.0} and \ref{26.00} can also be easier to verify than conditions  \eqref{li01} and \eqref{15.1}  in  Theorems \ref{li} and \ref{16.3}, since in \eqref{li01} and \eqref{15.1}   the   process and the limiting semigroup are considered for different initial data which, however, can sometimes cause big difficulties, see \cite[Section 5.2]{cui16svva} for a related  example of multi-valued inclusions.

 \section*{Acknowledgements}  
 On the occasion of Professor Peter Kloeden's 70th birthday,  the author would like to express his sincere thanks  to Professor Peter for his  enlightening advice and patient guidence   during his  stay in Wuhan,  from which  the author learnt a lot  on non-autonomous dynamical systems.    

\end{document}